\documentclass[10pt]{article}
\usepackage{amsmath, amsthm, amsfonts, amssymb, amscd, bm, dsfont, enumitem}
\usepackage{graphicx}
\usepackage{subfigure}
\usepackage{algorithm}
\usepackage{algorithmic}

%\usepackage{xcolor}
%\definecolor{darkgreen}{rgb}{0,0.6,0}
%\definecolor{darkred}{rgb}{0.6,0,0}
\usepackage[pdftex,
            bookmarks=true,
            bookmarksnumbered=true,
            pdfpagemode=UseOutlines,
            pdfstartview=FitH,
            pdfpagelayout=OneColumn,
            colorlinks=false,
            pdfborder={0 0 0}]{hyperref}
\usepackage{comment}

%%========set page size A4=============
%\setlength{\paperwidth}{21cm}
%\setlength{\paperheight}{29.7cm}
%
%\setlength{\textwidth}{16cm}
%\setlength{\textheight}{24cm}
%
%\setlength{\evensidemargin}{0.5cm}
%\setlength{\oddsidemargin}{0.25cm}
%
%\setlength{\topmargin}{0cm}
%\setlength{\headheight}{-1cm}

\theoremstyle{plain}
\newtheorem{theorem}{Theorem}
\newtheorem{lem}[theorem]{Lemma}
\newtheorem{thm}[theorem]{Theorem}

\theoremstyle{definition}
\newtheorem{defn}{Definition}

\newtheorem{exmp}[theorem]{Example}

\numberwithin{theorem}{section}
\numberwithin{defn}{section}
\numberwithin{equation}{section}

\newcommand{\real}{\mathbb{R}}

\newcommand{\myobot}{\mathop{\bigcirc\kern-11.5pt\perp}\nolimits}

%%To have nice-looking double widehats
\makeatletter
\def\@tempa#1{\@xp\@tempb\meaning#1\@nil#1}
\def\@tempb#1>#2#3 #4\@nil#5{%
  \@xp\ifx\csname#3\endcsname\mathaccent
    \@tempc#4?"7777\@nil#5%
  \else
    \PackageWarningNoLine{amsmath}{%
      Unable to redefine math accent \string#5}%
  \fi
}
\def\@tempc#1"#2#3#4#5#6\@nil#7{%
  \chardef\@tempd="#3\relax\set@mathaccent\@tempd{#7}{#2}{#4#5}}

\@tempa\widehat
\makeatother

\title{Error Estimation of Numerical Solvers for Linear Ordinary Differential Equations}

\author{Wenyuan Wu \and Wenqiang Yang}

\date{\today}

\begin{document}

\maketitle

\begin{abstract}
Solving Linear Ordinary Differential Equations (ODEs) plays an important role in many applications. There are various
numerical methods and solvers to obtain approximate solutions. However, few work about global error estimation can be found in
the literature. In this paper, we first give a definition of the residual, based on the piecewise Hermit interpolation, which is a kind of the backward-error of ODE solvers. It indicates the reliability and quality of numerical solution.
Secondly, the global error between the exact solution and an approximate solution is the forward error and a bound of it can be given by using the backward-error. The examples in the paper show that our estimate works well for a large class of ODE models.
\end{abstract}

%%%%%%%%%%%%%%%%%%%%%%%%%%%%%%%%%%%%%%%%%%%%%%%%%%%%%%%%%%%%%%%%
\section{Introduction}\label{sec:intro}
Numerical modelling and simulation is of great importance in mechanical engineering and industrial design. ODEs Often appear in such models. To simplify the model or ease solving cost, people usually consider linearized systems.
Numerical solving of linear ODE is well studied. Even a number of efficient and sophisticated solvers have been developed. How to qualify and how to measure the reliability of these solvers is a natural question.

There are numerous studies about estimating and controlling errors by constant step size methods and variation of step size methods. Please see the survey \cite{Shampine2004} for more details.
Different from the interests above, the motivation of this paper is to establish a bridge to connect the forward error (the global error) and the backward error (the residual) for a given numerical solution of ODEs by some method.

The standard from of an initial value problem (IVP) for ODEs is
\begin{equation}\label{sivp}
\left\{\begin{array}{l}
{\bm x}'={\bm f}(t,{\bm x}(t))\\
{\bm x}(t_{0})={\bm x}_{0}
\end{array}\right.
\end{equation}
Where ${\bm x}={\bm x}(t):\real\rightarrow\real^n$
is vector of solutions as a function time. At the starting
point $t_{0}$, ${\bm x}(t_{0})={\bm x}_{0}$ is the given as
the initial condition, in which ${\bm x}_{0}\in\real^n$,
and ${\bm f}:\real\times\real^n\rightarrow\real$ is in
 general a nonlinear function of $t$ and ${\bm x}$.

The existence, uniqueness and continuity for exact solution are three general requirement of well-posed problems, which for initial value problem were rigorously treated by Cauchy and Walter \cite{Walter1998}. Traditional discrete numerical methods partition the interval of interest, $[t_{0},t_{m}]$, by introducing a mesh $t_{0}< t_{1}<\cdots<t_{N}=t_{m}$ and generate a discrete approximation ${\bm x}_{i}\approx {\bm x}(t_{i})$ for each associated mesh-point. There must be iterative approximation error while adopting numerical methods \cite{Enright2000}. Moreover, the more mesh-points are selected, the smaller error and more cost will be. The most important reason for estimating error is to gain some confidence in the numerical solution.

Residual control has a number of virtues. One is that it assesses the error throughout the interval of integration, not just the error made in advancing a step \cite{Shampine2004}.
The schemes proposed by Enright \cite{Enright2000} for estimating a norm of the residual are plausible but without considering the derivative of each point and they are not asymptotically correct. Others \cite{Higham1989, Higham1991, Shampine2005} do much works on obtaining an asymptotically correct estimate, most of them work hard on the local errors.

Usually, we can not obtain the exact solution of IVP for ODEs. Fortunately, a number of numerical solvers e.g. MATLAB can approach it when the used tolerance is sufficiently small. And many people have done important work on this \cite{Calvo1997, Shampine1989, Stetter1980}. However, no matter how many mesh-points you choose, for numerical solution, there always has a question about the global error which tells how close from the numerical solution to the exact solution at any point. Global error estimation \cite{Dormand1984, Dormand1991, Shampine1976, Shampine19762} can answer it partially, but it is generally thought to be too expensive and too restricted. Lipschitz constant \cite{Robert2013} can help create an error bound, but the bound is too pessimistic in many applications.  In \cite{Robert2013}, it is analyzed the relationship of condition number and residual to estimate global error, but the condition number approach needs the fundamental solutions of homogenous ODEs.

In this paper, our main contribution is a global error estimation method for a given numerical solution of a linear ODE, which works even for unstable ODEs.

\section{Preliminaries}\label{sec:res}
In this section, to deal with numerical solution of dynamical system, we firstly give some brief introduction on
initial value problem of linear ODEs and an interpolation method -- hermit cubic spline \cite{Uri2009, Endre2003}. Then we give a
definition of residual-error similar to Sec.\ref{sec:res} of \cite{Robert2013}.

\subsection{Initial Value Problem of Linear ODEs}\label{ssec:ivp}
Usually, we first translate nonlinear systems
to linear or linearized systems, as ${\bm f}$ is a velocity
vector and ${\bm x}$ is a curve in phase space that tangent
to the vector field at every point. The system (\ref{sivp})
can also be expressed into matrix-vector notation conveniently as follows:
\begin{equation}\label{livp}
\left\{\begin{array}{l}
 {\bm x}'={\bm A}(t)\cdot{\bm x}(t)+{\bm q}(t)\\
 {\bm x}(t_{0})={\bm x}_{0}
\end{array}\right.
\end{equation}

Where the $n\times n$ matrix ${\bm A}(t)$ is the state matrix,
the $n$ dimensional vector ${\bm q}(t)$ corresponds to the inhomogeneous
part of the system.

In addition, for a linear time-invariant system,
its special solution is given by Equation (\ref{eq:sol}) (see  $18.VI$ of \cite{Walter1998}).
\begin{equation}\label{eq:sol}
\left\{\begin{array}{l}
{\bm x}(t)={\bm X}(t)\cdot[{\bm c}+\int^{t}_{t_{0}}{\bm X}^{-1}(s)\cdot{\bm q}(s)ds]\\
{\bm x}(t_{0})={\bm c}
\end{array} \right.
\end{equation}
where $n\times n$ matrix ${\bm X}(t)$ is the fundamental solution
of ${\bm x}'={\bm A}(t)\cdot{\bm x}(t)$, ${\bm c}$ is a vector of constant.
But there are many linear time-variant systems in practice, which is difficult to calculate the fundamental solutions ${\bm X}(t)$ directly.

\subsection{Hermite Cubic Spline}\label{ssec:phi}
 A number of numerical solvers for especially IVP for ODEs are
 given in MATLAB, such as: ode$45$, ode$23$, ode$113$, ode$23t$, ode $25s$
 and so on. Ode$45$ is the most common used method for solving ODEs,
 adopting Runge-Kutta fourth-order algorithm, suitable for high precision.

However, there is a question that how close is the numerical solution from
the exact solution. We can only set up the precision of each step by MATLAB, and we
can also estimate the local truncation error which can roughly reflect
the distance.
 In fact, we can get not only the values of the variables of Eq.(\ref{livp})
 at each step, but also their derivatives by ode$45$ directly. In order
 to construct continuous functions to approximate the exact solution, we apply
 Hermite cubic spline interpolation.

We strengthen our requirements on the smoothness of the functions that we wish to interpolate and assume that
$f\in C^{1}[a,b]$; simultaneously, we shall relax the smoothness requirements
on the result spline approximation $p$ by demanding that $p\in C^{1}[a,b]$
only \cite{Endre2003}.

 \begin{thm}\label{thm:cubicHermit}
 Let $K={x_{0},...,x_{m}}$ be a set of knots in the interval $[a,b]$ with
 $a = x_{0}< x_{1} <...< x_{m} = b$ and $m\geq2$, there is a unique cubic spline $p \in C^{1}[a,b]$ on
 the interval $[x_{i-1},x_{i}]$, for $i = 1,...,m$ such that
 \[
    p(x_{i})= f(x_{i}), p'(x_{i}) = f'(x_{i}), for i = 1,...,m
 \]
 Writing the spline $p$ on the interval $[x_{i-1},x_{i}]$ as
 \[
    p(x)=c_{0}+c_{1}(x-x_{i-1})+c_{2}(x-x_{i-1})^{2}+c_{3}(x-x_{i-1})^{3}, x\in[x_{i-1},x_{i}]
 \]
 Where $c_{0}=f(x_{i-1})$, $c_{1}=f'(x_{i-1})$, \\
 and $c_{2}=3\frac{f(x_{i})-f(x_{i-1})}{(x_{i}-x_{i-1})^{2}}-\frac{f'(x_{i})+2f'(x_{i-1})}{(x_{i}-x_{i-1})}$,
 $c_{3}=\frac{f'(x_{i})+f'(x_{i-1})}{(x_{i}-x_{i-1})^{2}}-2\frac{f(x_{i})-f(x_{i-1})}{(x_{i}-x_{i-1})^{3}}$.
 \end{thm}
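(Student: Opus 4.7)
The plan is to decouple the problem subinterval by subinterval, use the local interpolation conditions to determine a unique cubic on each $[x_{i-1},x_i]$, and then observe that the global $C^1$ continuity requirement is automatically satisfied by the way the conditions are chosen.

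First I would fix $i\in\{1,\ldots,m\}$ and set $h = x_i - x_{i-1}$. On the subinterval $[x_{i-1},x_i]$ I would write the candidate cubic in the shifted monomial basis
\[
p(x) = c_0 + c_1(x-x_{i-1}) + c_2(x-x_{i-1})^2 + c_3(x-x_{i-1})^3,
\]
which has four free coefficients, matched against the four Hermite conditions $p(x_{i-1})=f(x_{i-1})$, $p'(x_{i-1})=f'(x_{i-1})$, $p(x_i)=f(x_i)$, $p'(x_i)=f'(x_i)$. Evaluating $p$ and $p'$ at $x_{i-1}$ immediately gives $c_0 = f(x_{i-1})$ and $c_1 = f'(x_{i-1})$, so only $c_2$ and $c_3$ remain unknown. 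The two remaining conditions at $x_i$ yield the $2\times 2$ linear system
\[
\begin{pmatrix} h^2 & h^3 \\ 2h & 3h^2 \end{pmatrix} \begin{pmatrix} c_2 \\ c_3 \end{pmatrix} = \begin{pmatrix} f(x_i) - f(x_{i-1}) - f'(x_{i-1})\,h \\ f'(x_i) - f'(x_{i-1}) \end{pmatrix}.
\]
The determinant of the coefficient matrix is $3h^4 - 2h^4 = h^4 > 0$, so the system has a unique solution. Solving it (Cramer's rule, or direct elimination) produces precisely the formulas given for $c_2$ and $c_3$ in the statement; this is the main computational step but is entirely routine.

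To finish, I would verify the global regularity and uniqueness. On each $[x_{i-1},x_i]$ the construction above yields a unique cubic, so piecing them together defines a unique piecewise-cubic $p$. At each interior knot $x_i$ the left piece satisfies $p(x_i^-)=f(x_i)$ and $p'(x_i^-)=f'(x_i)$, while the right piece satisfies $p(x_i^+)=f(x_i)$ and $p'(x_i^+)=f'(x_i)$ by construction; hence $p$ and $p'$ agree from both sides, giving $p\in C^1[a,b]$.

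The only potentially delicate point is the nonsingularity of the local $2\times 2$ system, and this is handled by the observation that $h = x_i - x_{i-1} > 0$ for every $i$ since the knots are strictly increasing. Everything else is bookkeeping, so I expect no genuine obstacle; the proof is essentially four identifications of coefficients and one determinant check.
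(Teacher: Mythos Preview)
Your argument is correct and is exactly the standard derivation of the Hermite cubic spline coefficients. Note, however, that the paper does not supply its own proof of this theorem: it is quoted as a known result from the cited textbooks \cite{Uri2009,Endre2003}, so there is nothing in the paper to compare your proof against beyond confirming that your coefficient formulas match the ones stated.
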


 \begin{thm}\label{thm:cubicHermiterror}
 Let $f \in  C^{4}[a,b]$, and let s be the Hermite cubic spline that interpolates $f$
 at the knots $a = x_{0}< x_{1} <...< x_{m} = b$; then, the following error bound holds:
 \[
 \|f-p\|_{\infty}\leq\frac{1}{384}h^{4}\|f^{(4)}\|_{\infty}
 \]
 where $f^{(4)}$ is the fourth derivative of $f$ with respect to its
 argument $x$, $h=\max\limits_{i}(x_{i}-x_{i-1})$, and $\|\cdot\|_{\infty}$ denotes
 the $\infty$-norm on the interval $[a,b]$.
  \end{thm}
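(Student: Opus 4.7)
The plan is to reduce the global bound to a subinterval bound, and on each subinterval exploit the standard Hermite interpolation error formula. Fix an arbitrary subinterval $[x_{i-1},x_i]$, write $h_i=x_i-x_{i-1}$, and let $p$ denote the cubic polynomial agreeing with $f$ and $f'$ at both endpoints (which is exactly the restriction of the spline by Theorem \ref{thm:cubicHermit}). I claim that for every $x\in[x_{i-1},x_i]$ there exists $\xi\in(x_{i-1},x_i)$ such that
\[
f(x)-p(x)=\frac{f^{(4)}(\xi)}{4!}(x-x_{i-1})^{2}(x-x_{i})^{2}.
\]
Once this pointwise representation is in hand, the theorem follows by bounding $|f^{(4)}(\xi)|$ by $\|f^{(4)}\|_{\infty}$ and maximising the polynomial factor: elementary calculus shows $\max_{x\in[x_{i-1},x_i]}(x-x_{i-1})^{2}(x-x_{i})^{2}=h_i^{4}/16$, attained at the midpoint, which yields $|f(x)-p(x)|\leq h_i^{4}\|f^{(4)}\|_{\infty}/(24\cdot 16)=h_i^{4}\|f^{(4)}\|_{\infty}/384$. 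Taking the maximum over $i$ and using $h=\max_i h_i$ gives the global bound.

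To establish the representation I would use the classical auxiliary-function trick. Fix $x\in(x_{i-1},x_i)$ distinct from the endpoints (the endpoint case is trivial), and set
\[
\phi(t)=f(t)-p(t)-K\,(t-x_{i-1})^{2}(t-x_{i})^{2},
\]
choosing the constant $K$ so that $\phi(x)=0$. By construction $\phi$ also vanishes at $x_{i-1}$ and $x_i$, and because $p$ matches both $f$ and $f'$ at these endpoints, and the factor $(t-x_{i-1})^{2}(t-x_{i})^{2}$ has double zeros there, each endpoint is a double zero of $\phi$. Thus $\phi$ has at least five zeros in $[x_{i-1},x_i]$ counted with multiplicity.

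Next I would apply Rolle's theorem iteratively: $\phi'$ acquires at least four zeros, $\phi''$ at least three, $\phi'''$ at least two, and finally $\phi^{(4)}$ has at least one zero $\xi\in(x_{i-1},x_i)$. Since $p$ is a cubic, $p^{(4)}\equiv 0$, and since $(t-x_{i-1})^{2}(t-x_{i})^{2}$ is monic of degree four, its fourth derivative is the constant $4!=24$. Evaluating $\phi^{(4)}(\xi)=0$ therefore gives $K=f^{(4)}(\xi)/24$, and substituting this value back into $\phi(x)=0$ yields the claimed representation.

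The only mildly delicate step is the multiplicity bookkeeping in the Rolle-chain, since one has to be careful that the zeros produced at each differentiation stay in the open interval and that the double zeros at the endpoints are counted correctly; everything else is calculus. The midpoint maximisation of the quartic factor is a short calculation (set the derivative to zero and check the critical point is the midpoint), and passing from the subinterval bound to the $\infty$-norm bound on $[a,b]$ is immediate because the estimate uses only $h\geq h_i$ and the global supremum of $|f^{(4)}|$.
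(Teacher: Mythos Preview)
Your argument is correct and is the standard textbook proof of this classical result. Note, however, that the paper does not supply its own proof of this theorem: it is quoted as background from the cited references \cite{Uri2009,Endre2003} (in particular S\"uli--Mayers), so there is nothing to compare against. Your Rolle-chain derivation is exactly the proof one finds in those sources.
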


\subsection{The Residual of a Numerical Solution}\label{ssec:re}
When a numerical solver deals with IVP for ODEs (\ref{sivp}) for $t \in[t_{0},t_{m}]$,
it divides the interval $[t_{0},t_{m}]$ into $m$ subintervals which depends by required accuracy.
Associate with Eq.(\ref{sivp}), we give a definition to backward error.

\begin{defn}\label{defn:backward error}
Let $\{t_{0},t_{1},...,t_{m}\}$ be a set of knots in the interval $[t_{0},t_{m}]$
with $t_{0}< t_{1} <...< t_{m}$, and $m\geq2$, the length of each step $h_{i}$ can be define as
 $h_{i}=t_{i}-t_{i-1}$, $m\geq i\geq1$.
 Let  $\{{\bm x}_{k}\}$ and $\{{\bm x}'_{k}\}$ for $k=0,...,m$ be the numerical solution of IVP for odes Eq.(\ref{sivp}), where ${\bm x}_{k}$, ${\bm x}'_{k}$ are vectors.
 There is a unique Hermite cubic spline
$\widetilde{{\bm x}}(t)\in C^1[t_{0},t_{m}]$, at each of the knot $k$ for $k=0,...,m$, such that
\[
\widetilde{{\bm x}}(t_{k})={\bm x}_{k},\widetilde{{\bm x}}'(t_{k})={\bm x}'_{k}
\]
\end{defn}
We define $\widetilde{{\bm x}}(t)$ as the corresponding \textbf{interpolating solution}.
Specially,  because of IVP for ODEs, the initial value $t_{0}$ is no error
point with no doubt, that means $\widetilde{{\bm x}}(t_{0})={\bm x}_{0}$, that's to say, there is no error for $\widetilde{{\bm x}}$ at the initial point.

\begin{defn}\label{defn:residual error}
Let $n$-vector $\widetilde{{\bm x}}(t) \in C^1[t_{0},t_{m}]$ be interpolating solution of Eq.(\ref{sivp}).
Then its \textbf{residual} is defined to be
\begin{equation}\label{re:resdual}
{\bm\delta}(t)=\widetilde{{\bm x}}'(t)-f(t,\widetilde{{\bm x}}(t))
\end{equation}
Moreover, it's clear that ${\bm\delta}(t_{0})=0$.
\end{defn}

\begin{exmp} \label{ex:1}
Consider this initial-value problem:
\[
   x(t)'=x(t), x(0)=1
\]
Obviously,the exact solution of this system is $x^{*}(t)=e^{t}$.
Suppose $t\in[0, 2]$ and we simply divide the interval
into $2$ parts $[0, 1]$ and $[1, 2]$ with the same step length $h=1$.
Then we have the numerical solutions ${x_{0}=1, x_{1}=e, x_{2}=e^{2}}$
and ${x'_{0}=1, x'_{1}=e, x'_{2}=e^{2}}$ respectively.
Then we can interpolate Hermite cubic spline, as follow:
\[
   \widetilde{x}(t)= \left\{\begin{array}{ccc}
   &1+t+(2e-5)t^{2}+(3-e)t^{3}& t\in[0,1] \\
   &e\cdot(t+(2e-5)(t-1)^{2}+(3-e)(t-1)^{3})& t\in[1,2]
   \end{array}\right.
\]
In addition, the forward error can be write as $\Delta x(t)=x^{*}(t)-\widetilde{x}(t)$.
\begin{figure}[H]

  % Requires \usepackage{graphicx}
  \includegraphics[height=8cm,width=13cm]{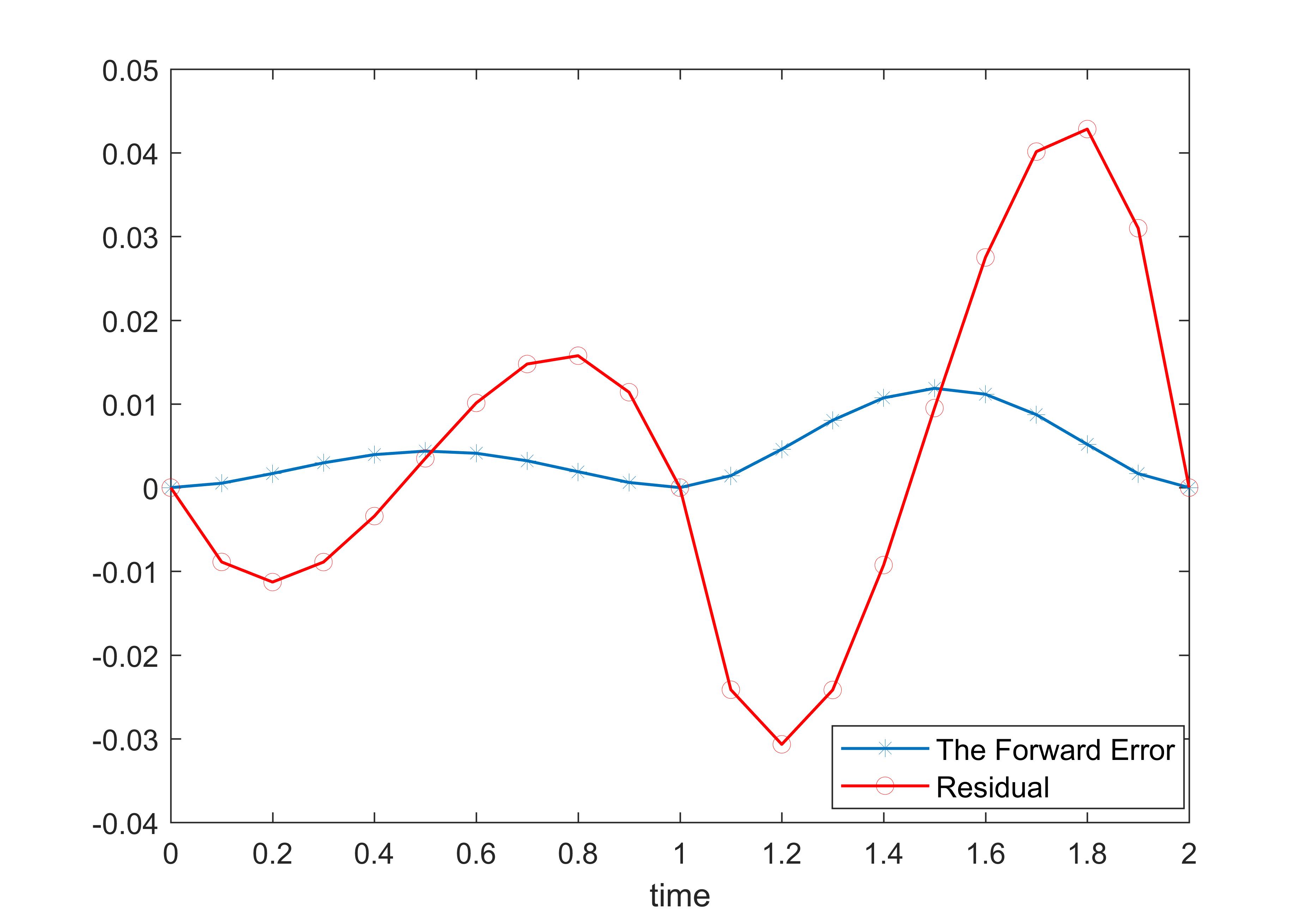}\\
  \caption{The Performance of Error Expressions}\label{example}
\end{figure}

According to Eq.(\ref{re:resdual}), it's easy to get the residual
\[
   \delta(t)=\left\{\begin{array}{ccc}
   &((-3+e)\cdot t -4e+11)\cdot(t-1)\cdot t& t\in[0,1] \\
   &e\cdot((-3+e)\cdot t -5e+14)\cdot(t-1)\cdot (t-2) & t\in[1,2]
    \end{array}\right.
\]

Above all, we can draw the figure \ref{example} of the performance of error expressions
by Matlab. Obviously, ${\delta}(0)=0$, ${\delta}(1)=0$, ${\delta}(2)=0$, that means Hermite cubic spline accurately across three points, besides these points, the residual will be greater than zero.
\end{exmp}

%%%%%%%%%%%%%%%%%%%%%%%%%%%%%%%%%%%%%%%%%%%%%%%%%%%%%%%%%%%%%%%%
\section{Error Estimation for Linear ODEs}\label{sec:error estimation}

In this section,  we firstly give some basic concepts of the linear ODEs in Sec.\ref{ssec:error relationship}, in addition, in Sec.\ref{ssec:ee}, we give an error estimation for linear ODEs system.

\subsection{Global Error}\label{ssec:error relationship}

Let $n$-vector ${\bm x}^{*}(t)$ be the exact solutions of linear ordinary system Eq.(\ref{livp}), such that
\begin{equation}\label{exact solution of lvip}
	({\bm x}^{*})'={\bm f}(t,{\bm x}^{*})={\bm A}(t)\cdot{\bm x}^{*}+{\bm q}(t)
\end{equation}
Let $\widetilde{{\bm x}}(t)$ be an interpolating vector solution from a given numerical solution data.

\begin{defn}
Let $n$-vector $\Delta{\bm x}(t) = {\bm x}^{*} - \widetilde{{\bm x}}$ be the \textbf{forward error}.
\end{defn}

It is not difficult to see the following result.
\begin{lem}\label{lem:forward}
Let ${\bm \delta}(t)$ be the residual of  Eq.(\ref{livp}), such that $ {\bm\delta}=\widetilde{{\bm x}}'-{\bm A}(t)\cdot \widetilde{{\bm x}}-{\bm q}(t)$. Then the forward error satisfies the following IVP of ODE
\begin{equation}\label{eq:forward error solution}
	\Delta{\bm x}'={\bm A}(t)\cdot \Delta{\bm x}-{\bm \delta}
\end{equation}
and $\Delta{\bm x}(t_0)=0$.
\end{lem}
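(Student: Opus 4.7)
The plan is to prove this lemma by direct subtraction of two defining equations, using linearity of the ODE. The statement is essentially a bookkeeping identity: because the ODE is linear in $\bm x$, the difference of two functions solving perturbed versions of the same equation itself satisfies a linear ODE with the same coefficient matrix $\bm A(t)$, and the forcing on the right is exactly the difference of the forcings, i.e.\ the residual.

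First I would write down the two equations explicitly. The exact solution satisfies
\[
 (\bm x^{*})'(t) = \bm A(t)\,\bm x^{*}(t) + \bm q(t),
\]
by Equation~(\ref{exact solution of lvip}). The interpolating solution $\widetilde{\bm x}(t)$, by the stated form of the residual
$\bm\delta = \widetilde{\bm x}' - \bm A(t)\widetilde{\bm x} - \bm q(t)$, satisfies
\[
 \widetilde{\bm x}'(t) = \bm A(t)\,\widetilde{\bm x}(t) + \bm q(t) + \bm\delta(t).
\]
Subtracting the second from the first and using linearity of $\bm A(t)\,\cdot$ in its argument yields
\[
 (\bm x^{*} - \widetilde{\bm x})'(t) = \bm A(t)\bigl(\bm x^{*}(t) - \widetilde{\bm x}(t)\bigr) - \bm\delta(t),
\]
which, with $\Delta\bm x = \bm x^{*} - \widetilde{\bm x}$, is exactly Equation~(\ref{eq:forward error solution}).

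For the initial condition, I would invoke the remark right after Definition~\ref{defn:backward error}: since $t_0$ is the initial point of the IVP, we have $\widetilde{\bm x}(t_0) = \bm x_0 = \bm x^{*}(t_0)$, hence $\Delta\bm x(t_0) = 0$.

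There is no real obstacle here; the only thing to be careful about is making sure the sign of $\bm\delta$ matches the convention introduced in Definition~\ref{defn:residual error} (residual defined as $\widetilde{\bm x}' - \bm f(t,\widetilde{\bm x})$, so the inhomogeneous term appears as $-\bm\delta$ in the error equation, not $+\bm\delta$). Everything else is a one-line subtraction that relies only on linearity of $\bm f(t,\bm x) = \bm A(t)\bm x + \bm q(t)$ in $\bm x$.
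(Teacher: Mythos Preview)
Your proposal is correct and is exactly the straightforward subtraction argument the paper has in mind; in fact the paper does not give an explicit proof at all, merely prefacing the lemma with ``It is not difficult to see the following result.'' Your care about the sign of $\bm\delta$ and the initial condition $\widetilde{\bm x}(t_0)=\bm x_0=\bm x^{*}(t_0)$ is appropriate and nothing further is needed.
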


\subsection{Error Estimation}\label{ssec:ee}
Since the system is a linear time-variant system, the state matrix $A(t)$ makes the differential equation difficult of solve exactly. We need to suppose the change of $A(t)$ is not so large.

\begin{defn}\label{def:taylor expand}[Taylor expansion]
Let $n\times n$ matrix ${\bm A}(t) \in C^{3} $ at the neighborhood of $t_{0}$, and let ${\bm A}_{0}={\bm A}(t_{0})$, ${\bm A}_{1}=\frac{d{\bm A}}{dt}|_{t=t_{0}}$, ${\bm A}_{2}=\frac{1}{2!} \cdot \frac{d^{2}{\bm A}}{dt^{2}}|_{t=t_{0}}$, respectively. Let ${\bm R}(t)$ be the high order remainder, such that
\begin{equation}\label{eq:taylor}
	{\bm A}(t)={\bm A}_{0}+{\bm A}_{1}\cdot(t-t_{0})+{\bm A}_{2}(t-t_{0})^2+{\bm R}(t)
\end{equation}

\end{defn}
It's clear that ${\bm A}_{0}$ is a $n\times n$ constant matrix. In dynamic system, the eigenvalue of ${\bm A}_{0}$ is usually real constant, in this paper, what we mainly deal with is this kind of problem.

For time-invariant system, which has exact solution, no matter what numerical solver we choose, we can do accurate error compensation. However, for time-variant system it is difficult to calculate the exact solution, so we need to give a boundary of forward error.

\begin{thm}\label{thm:main}
	Let $n\times n$ constant matrix ${\bm A}_{0}$ be linear part of Taylor expansion of ${\bm A}(t)$ at time $t=t_{0}$. Suppose  ${\bm A}_{0}$'s eigenvalues $\{\lambda_{i}$, $i=1,2,...,n\}$ are real, satisfying $\lambda_{i} \geq \lambda_{j}, i \leq j$. There is an invertible matrix ${\bm P}$, such that ${\bm A}_{0}=P\Sigma P^{-1}$, where $\Sigma=\left( \begin {array}{cccc}
                    \lambda_{1}&&&\\
                      & \lambda_{2}&&\\
                     &&\ddots&\\
                     &&&\lambda_{n}
                   \end {array} \right)$ is diagonal matrix.
Let $h_{max}=\max\limits_{i,j,t}{\{({\bm P}^{-1}({\bm A}(t)-{\bm A}_{0}){\bm P}})_{i,j}\}$, and $\delta_{max}=\max\limits_{i,t}{\{(-{\bm P}^{-1}{\bm \delta}(t))_{i}\}}$, where $t\in [t_{0},t_{m}]$.

Then we have
\begin{equation} \label{eq:result1}
	 \|\Delta{\bm x}\|_{\infty} \leq \|{\bm P}\|_{\infty}\cdot \frac{\delta_{max}}{\lambda_{1}+n\cdot h_{max}}(e^{(\lambda_{1}+n\cdot h_{max})t}-1)
	 	\end{equation}		
\end{thm}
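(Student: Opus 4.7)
The plan is to diagonalize the linear part of the system, derive a componentwise differential inequality, and collapse it to a scalar Gr\"onwall comparison whose closed-form solution is exactly the right-hand side of \eqref{eq:result1}.

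First I would change coordinates via ${\bm y}(t) = {\bm P}^{-1}\Delta{\bm x}(t)$. By Lemma~\ref{lem:forward} and the Taylor decomposition ${\bm A}(t) = {\bm A}_0 + ({\bm A}(t)-{\bm A}_0)$, the transformed error satisfies
\begin{equation*}
{\bm y}'(t) = \bigl(\Sigma + {\bm B}(t)\bigr){\bm y}(t) + {\bm g}(t),\qquad {\bm y}(t_0)=0,
\end{equation*}
where ${\bm B}(t) := {\bm P}^{-1}({\bm A}(t)-{\bm A}_0){\bm P}$ and ${\bm g}(t) := -{\bm P}^{-1}{\bm \delta}(t)$. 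By the definitions of $h_{max}$ and $\delta_{max}$, every entry of ${\bm B}(t)$ is bounded by $h_{max}$ and every entry of ${\bm g}(t)$ is bounded by $\delta_{max}$.

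Next I would write the system componentwise as
\begin{equation*}
 y_i'(t) = \lambda_i\, y_i(t) + \sum_{j=1}^n B_{ij}(t)\, y_j(t) + g_i(t),
\end{equation*}
and use $\lambda_i\le\lambda_1$ together with the entry bounds to obtain the differential inequality
\begin{equation*}
 y_i'(t) \le \lambda_1\, y_i(t) + h_{max}\sum_{j=1}^n y_j(t) + \delta_{max}.
\end{equation*}
Introducing $M(t) := \max_{1\le i\le n} y_i(t)$ (which is locally Lipschitz and hence differentiable a.e.), evaluating the above inequality at the index attaining the maximum gives
\begin{equation*}
 M'(t)\le \bigl(\lambda_1 + n\, h_{max}\bigr)\, M(t) + \delta_{max},\qquad M(t_0)=0.
\end{equation*}

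I would then compare $M(t)$ with the solution $u(t)$ of the companion scalar linear ODE $u' = (\lambda_1+nh_{max})u + \delta_{max}$, $u(t_0)=0$, whose closed form is exactly $u(t) = \frac{\delta_{max}}{\lambda_1+n h_{max}}\bigl(e^{(\lambda_1+n h_{max})t}-1\bigr)$. A standard Gr\"onwall argument (multiplying by the integrating factor $e^{-(\lambda_1+nh_{max})t}$) yields $M(t)\le u(t)$. Finally, transforming back by $\Delta{\bm x}={\bm P}{\bm y}$ and taking the $\infty$-norm,
\begin{equation*}
 \|\Delta{\bm x}\|_\infty \le \|{\bm P}\|_\infty\|{\bm y}\|_\infty \le \|{\bm P}\|_\infty\, M(t),
\end{equation*}
which is exactly \eqref{eq:result1}.

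The main obstacle is the step collapsing the coupled vector inequality to the scalar one for $M(t)$: one must justify the pointwise differentiation of the maximum (e.g., using Danskin's theorem or working with Dini derivatives), and also interpret $h_{max}$ and $\delta_{max}$ consistently with the sign conventions in the definition so that $\sum_j B_{ij}y_j\le n\,h_{max}\,M$ and $g_i\le \delta_{max}$ hold for the relevant components. Provided the tacit positivity/sign assumptions behind the definitions of $h_{max}$ and $\delta_{max}$ are in force, the rest is routine Gr\"onwall machinery.
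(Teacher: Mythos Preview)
Your overall strategy coincides with the paper's: diagonalize via ${\bm y}=P^{-1}\Delta{\bm x}$, bound the transformed system componentwise, collapse to a scalar comparison ODE, and solve it in closed form. The only substantive discrepancy is your choice of scalar majorant. You set $M(t)=\max_{1\le i\le n} y_i(t)$ (no absolute values), and then in the last line invoke $\|{\bm y}\|_\infty\le M(t)$. That inequality is false in general: a component $y_k(t)$ can be large and negative, giving $\|{\bm y}\|_\infty=|y_k(t)|>M(t)$. The sign caveats you flag about $h_{max}$ and $\delta_{max}$ do not repair this; the gap is in the definition of $M$ itself.

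The paper sidesteps this by working instead with $z(t)=\max_i|y_i(t)|=\|{\bm y}\|_\infty$. At an index $i$ where the maximum is attained one has $z'=\pm y_i'\le|y_i'|$, and the same entrywise bounds you wrote give $|y_i'|\le \lambda_1 z+n\,h_{max}\,z+\delta_{max}$, so the scalar inequality $z'\le(\lambda_1+n h_{max})z+\delta_{max}$ holds with $z(t_0)=0$. Comparing with the explicit solution $\phi$ of the companion ODE yields $z\le\phi$, and now $\|\Delta{\bm x}\|_\infty\le\|P\|_\infty\,\|{\bm y}\|_\infty=\|P\|_\infty\,z$ is tautological. Replace your $M$ by this $z$ and your argument goes through verbatim; the Dini-derivative remark you made about differentiating the maximum applies equally well to $z$.
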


\begin{proof}
As to Eq.({\ref{eq:forward error solution}}), after Taylor expansion, we can rewrite the equation as follow
\[
\begin{array}{rcl}
	\Delta{\bm x}'&=&{\bm A}_{0}\cdot \Delta{\bm x}+({\bm A}(t)-{\bm A}_{0})\cdot \Delta{\bm x}-{\bm \delta}\\
 &=& P\Sigma P^{-1}\cdot \Delta{\bm x}+({\bm A}(t)-{\bm A}_{0})\cdot \Delta{\bm x}-{\bm \delta}
\end{array}
\]
Multiplying the former equation on both sides simultaneously by matrix ${\bm P}^{-1}$. Let a vector ${\bm y}=(y_{1}(t),y_{2}(t),...,y_{n}(t))^{T}=P^{-1} \Delta{\bm x}$, there must be ${\bm y}'={\bm P}^{-1} \Delta{\bm x}'$ and ${\bm y}(t_{0})=0$, to simplify equation as
\[
{\bm y}'= \Sigma {\bm y}+[{\bm P}^{-1}({\bm A}(t)-{\bm A}_{0}){\bm P}]\cdot {\bm y}+(-{\bm P}^{-1}{\bm \delta}(t))
\]
Let $z(t)=\max\limits_{1\leq i\leq n}|y_{i}(t)|$, that's to say $z(t)=\|{\bm y}\|_{\infty}$. For every integer $i$, $1\leq i\leq n$, no matter $y_{i}(t)$ is positive or negative, there must be
\[
   |y'_{i}(t)|\leq \lambda_{1}z(t)+n\cdot h_{max}z(t)+\delta_{max}
\]
Moreover, because $z(t)$ is always one element of vector $|{\bm y}|$, there must be
\[
 z'(t)\leq \lambda_{1}z(t)+n\cdot h_{max}z(t)+\delta_{max}
 \]
Denote a one-dimensional function $\phi(t)$, where $\phi(t_{0})=0$, satisfying
\[
     \phi'(t)=(\lambda_{1}+n\cdot h_{max})\phi(t)+\delta_{max}
\]
It's easy to calculate the solution
 \[\phi(t)=\frac{\delta_{max}}{\lambda_{1}+n\cdot h_{max}}(e^{(\lambda_{1}+n\cdot h_{max})t}-1)\]

Additionally, we know $z(t)\leq \phi(t)$, and $\|\Delta{\bm x}\|_{\infty}\leq \|{\bm P}\|_{\infty}\cdot \|{\bm y}\|_{\infty}$, we can finally deduce Eq(\ref{eq:result1})
\end{proof}

Above all, the error estimation will exponentially increase by time, mainly depending on the the exponent $\lambda_{1}+n\cdot h_{max}$, and linearly related to $\delta_{max}$. It seems that if dynamic system is stable $\lambda_{1}+n\cdot h_{max}<=0$, the error estimation boundary will be a constant while time is long enough. In contrary,if dynamic system is unstable $\lambda_{1}+n\cdot h_{max}>0$, the error estimation boundary will be very huge. However, this phenomenon does not indicate that error estimation is not reliable, in contrast, it works well. We will use Example \ref{ex:1} to explain this situation.

As we know, in Example \ref{ex:1}, the exact solution is $x^{*}=e^{t}$, if there is an error $\varepsilon$ in numerical solution, satisfying $\widetilde{x}(t)=(1+\varepsilon)e^{t}-\varepsilon$. The residual will easily be calculated as $\delta(t)=\varepsilon$. Moreover, the forward error is $\Delta x=x^{*}-\widetilde{x}=\varepsilon(1-e^{t})$. It's obviously that the forward error of this example will be inevitable exponential growth, no matter what solver you choose. By Theorem \ref{thm:main}, as $\lambda_{1}=1$ and $h_{max}=0$ , our error estimation $\varepsilon(e^{t}-1)$ is also exponential increasing and exactly same as the forward error boundary $\|\Delta x\|_{\infty}$. It means that in general it is a sharp bound and  it is difficult to improve this bound.

\section{Examples}
In order to show performance of our error estimation to IVPS of Linear ODEs, in this section, we firstly use Theorem \ref{thm:main} to deal with a time-invariant system example. Then we give a time-variant stable system example. At last, we provide a time-variant unstable example from automobile suspension system with an adjustable damping model.

\subsection{Time-invariant System Example}
Consider this initial-value problem:
\[
   {\bm x}(t)'={\bm A}{\bm x}(t)+{\bm q}(t),{\bm A}=\left( \begin {array}{cc}
                    -1& 0\\
                     0 & 2
                   \end {array} \right), {\bm x}(0) = 0.
\]

In order to get the exact solution easily, let ${\bm x}= ( cos(\pi t)-1, sin(\pi t) )$. Then we obtain $\bf{ q}=\bf{ x}'-\bf{ A}\cdot \bf{ x}$.

Draw a figure by MATLAB as follow, in which the forward error $\Delta x_{1}(t)$ (the blue curve) almost is stable to fluctuate around zero, the forward error $\Delta x_{2}(t)$ (the red curve) is inevitable exponential growth. Our method of error estimation is exponential growth too, and it does gives an upper bound of the forward error all the time. Note, in our examples, we choose $\delta_{max}$ at each step and $h_{max}$ over the period.

\begin{figure}[H]

  % Requires \usepackage{graphicx}
  \includegraphics[height=10cm, width=12cm]{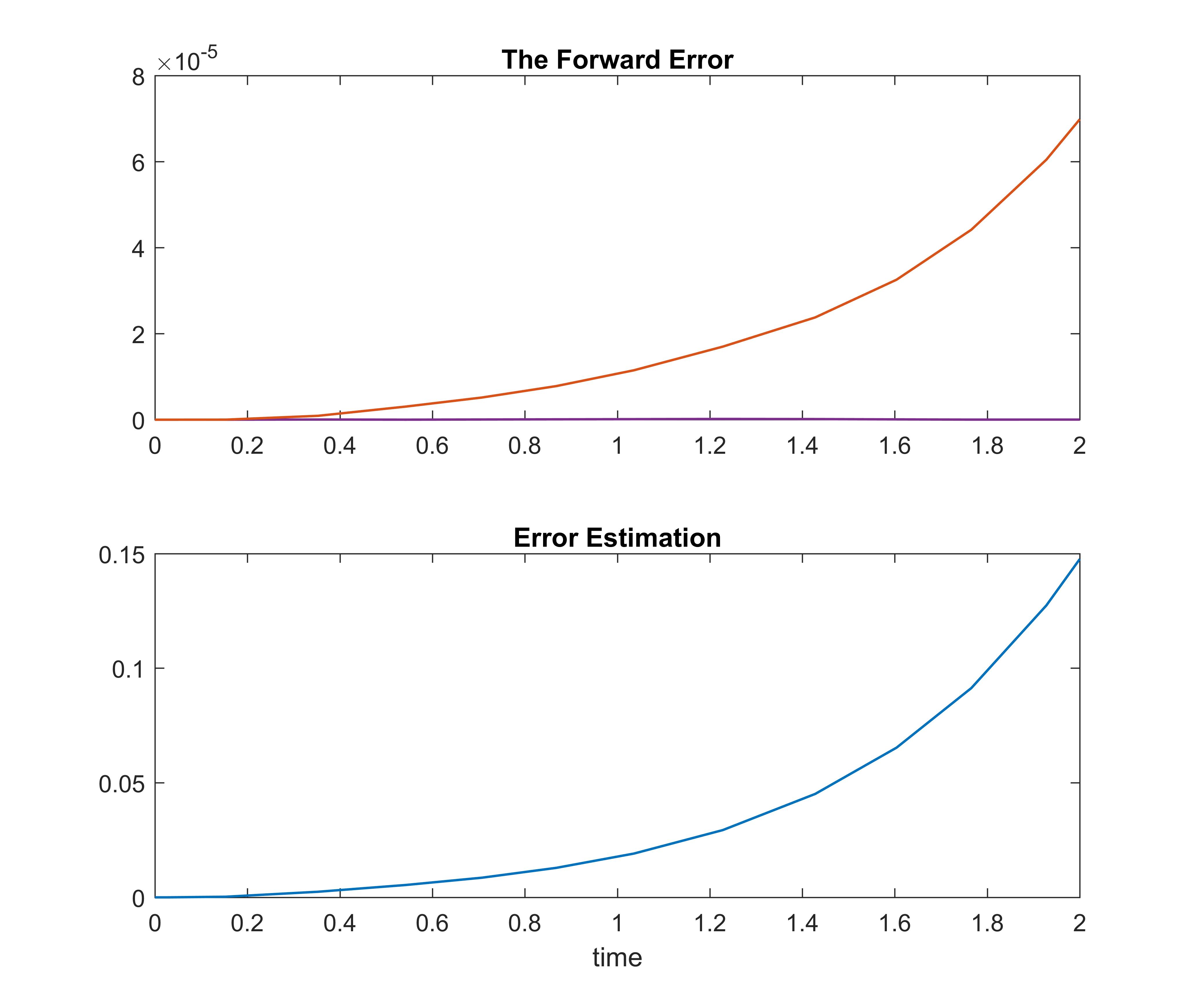}\\
  \caption{Time-invariant System Error Estimation}\label{example2}
\end{figure}

\subsection{A Time-variant Stable System Example}
Consider there is a linear dynamic system, its relationship as follow:
 \[\bf{ A}(t)=\left( \begin {array}{cc}
                    -6+0.2\cdot sin(2\pi t) & 0\\
                     1 & -5-0.1\cdot sin(\pi t)
                   \end {array} \right)  \ \mbox{ and }
  {\bm x}=\left( \begin{array}{c}cos(\pi t)-1\\sin(\pi t) \end {array} \right)\]
  In order to get the exact solution, we suppose the relationship $\bf{ q}=\bf{ x}'-\bf{ A}(t)\cdot \bf{ x}$. At the period $t\in[0,6]$, it's clear the initial ${\bm x}(0)=\left(\begin{array}{c}0\\0 \end{array} \right)$.

  We first use ode$45$ to solute the ODEs, we can get the numerical solution. Secondly, to deal with the numerical solution by hermit cubic spline, it's easy to calculate the residual of  numerical solution. Finally, according to theorem 3.3, we can do error estimation of numerical solution and draw a figure as Fig.({\ref{fig:stable}}).
  \begin{figure}[H]\label{fig:stable}
  % Requires \usepackage{graphicx}
  \includegraphics[height=10cm, width=12cm]{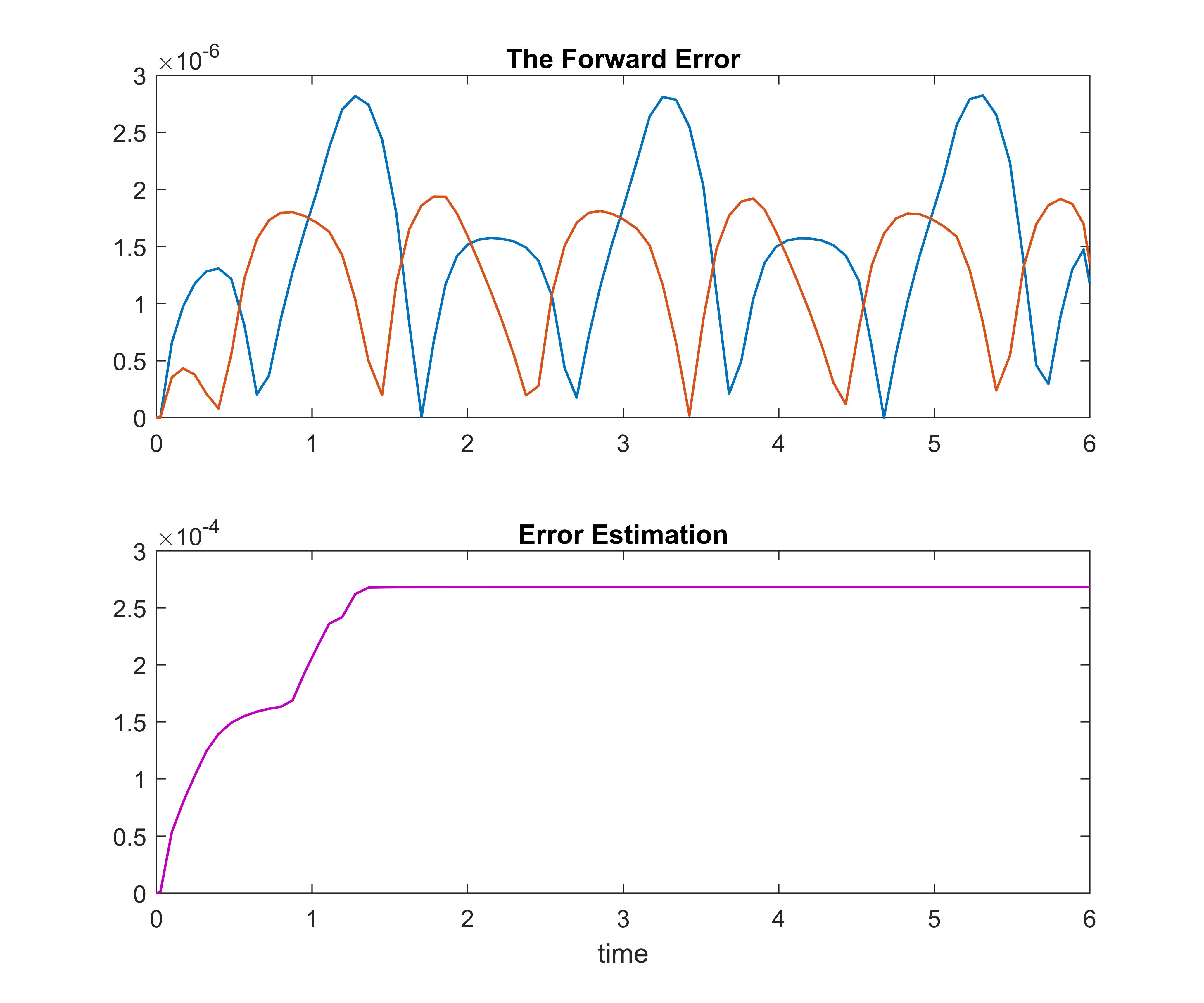}\\
  \caption{A Time-variant Stable System Error Estimation}
\end{figure}
     In this figure, we can see the boundary of error estimation is almost bigger than the boundary of forward error all the time. And both of them are stable as to the system is stable.
\subsection{Automobile Suspension System with an Adjustable Damping Absorber
}
While a car go though a ramp, we want it smooth enough, usually we use computer aided engineering software to simulate and to predict movement trends. However, the numerical solution of this dynamic system has error, which feedback to control-system will bring catastrophic result. It's necessary to calculate the forward error of numerical solution in order to warn control-system.The following we give an example of error estimation to automobile suspension system.

 Suppose: in the vertical direction, part mass of car $m1=500kg$, shock-absorbing spring $k1=1500N/m$, the mass of tire $m2=100kg$, the elasticity coefficient of tire $k2=2000N/m$.Additionally,  almost automobiles on road have active suspension, that's to say the damping will be changed by control system. Suppose the damping  is changed as $c1=200+200\cdot cos(2\pi t) N\cdot m/s^{2}$ in our example.
     The ODEs of this system can be write as $\bf{ x}'=\bf{ A}(t)\cdot \bf{ x}+\bf{ q}$. Where $ \bf{ x}=\left( \begin{array}{c}x_{1}\\x_{1}'\\x_{2}\\x_{2}'\end{array} \right)$ are part of car's displacement and velocity, tire's displacement and velocity, respectively.
     While go though a ramp, during $t\in[0,0.5]$, our goal is  $\bf{ x}=\left( \begin{array}{c}0.1\cdot sin(\pi t)\\0.1\cdot\pi cos(\pi t)\\0.01\cdot sin(\pi t)\\0.01\cdot\pi cos(\pi t)\end{array} \right)$, and initial value is  $\bf{ x}(0)=\left( \begin{array}{c}0\\0.1\cdot\pi \\0\\0.01\cdot\pi\end{array} \right)$, moreover, we can get $\bf{ q}=\bf{ x}'-\bf{ A}(t)\cdot \bf{ x}$.
\begin{figure}[H]
 \begin{minipage}[t]{0.5\linewidth}
  % Requires \usepackage{graphicx}
  \centering
  \includegraphics[height=6cm,width=6cm]{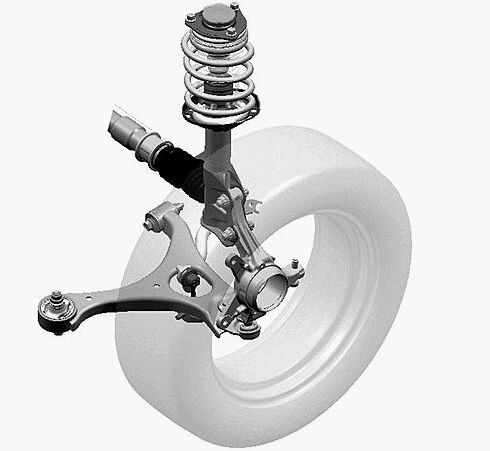}

  \end{minipage}
  \begin{minipage}[t]{0.5\linewidth}
  % Requires \usepackage{graphicx}
  \centering
  \includegraphics[height=6cm,width=6cm]{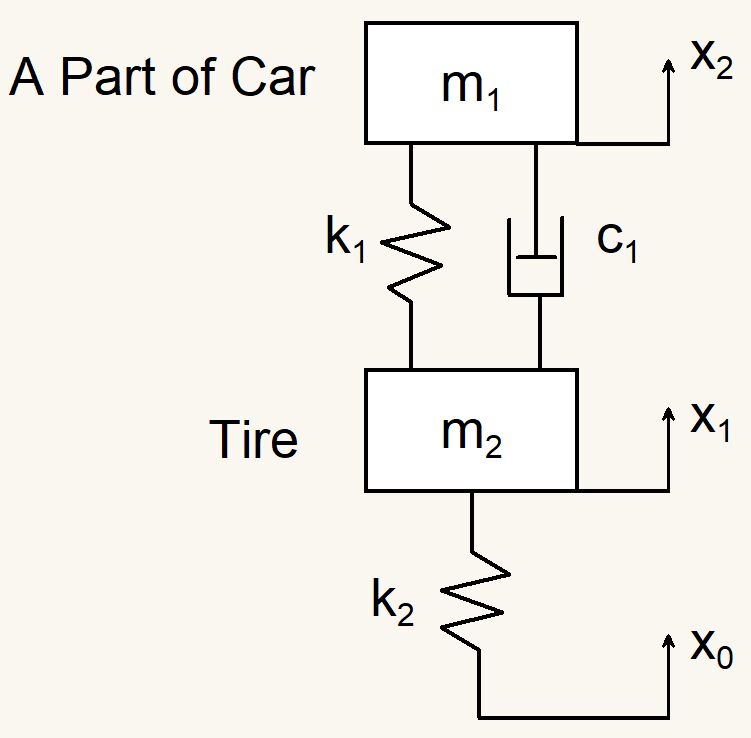}

  \end{minipage}
   \caption{Automobile Suspension System}

\end{figure}

 By using Taylor expansion $cos(x)=1-\frac{1}{2!}x^{2}+o(x^{2})$, we can get
 \[\bf{ A}(t)=\left( \begin {array}{cccc}
                    0 & 1 &0&0\\
                     3& \frac{4}{5}-\frac{\pi^{2}\cdot t^{2}}{250}&-3&\frac{\pi^{2}\cdot t^{2}}{250}-\frac{4}{5} \\
                     0&0&0&1\\
                     \frac{\pi^{2}\cdot t^{2}}{50}-4&15&35&4-\frac{\pi^{2}\cdot t^{2}}{50}\\
                   \end {array} \right) \]

As to the automobile suspension system is a time-variant unstable system, the forward error of numerical solution exponential growth by time, it's the same as the error estimation. That's to say, to unstable system ,if we need error estimation accuracy enough, the time should be short enough.
 \begin{figure}[H]
  % Requires \usepackage{graphicx}
  \includegraphics[height=10cm, width=12cm]{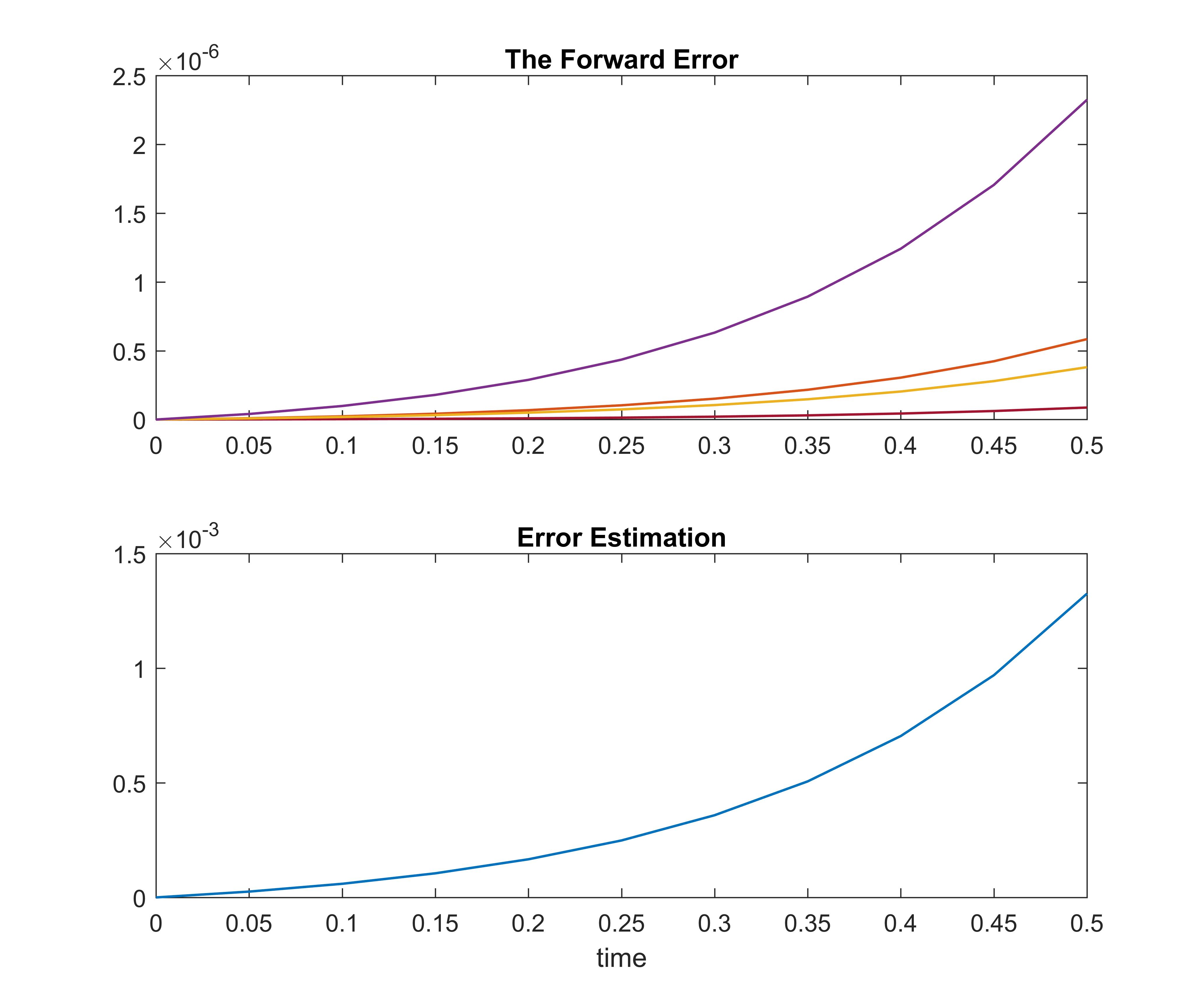}\\
  \caption{Automobile Suspension System}
\end{figure}

\section{Conclusion}
In this paper, we give a definition of residual, which accurately passes each point of a numerical solution. It has a tight  relationship with the forward error. Although, as indicated in Theorem \ref{thm:cubicHermiterror}, the Hermite cubic spline has error, the forward error satisfies the ODE (\ref{eq:forward error solution}) by Lemma \ref{lem:forward}.

For error estimation, we give a bound of time-invariant or time-variant system without exact solution. The bound of the estimation mainly depends on biggest eigenvalue, often smaller than Lipschitz constant. Verified by examples, our estimation method has better performance in unstable system. Moreover, if the backward error of numerical solution is smaller, that's to say the numerical solver is better or it uses a smaller step size, the error estimation will be more accurate accordingly.

\section{Acknowledgement}
This work was partially supported by NSFC 11471307 and CAS Research Program of Frontier Sciences
(QYZDB-SSW-SYS026).

\end{document}